\newtheorem{corollary}{Corollary}[section]
\newtheorem{theorem}{Theorem}[section]
\newtheorem{proposition}[theorem]{Proposition}
\theoremstyle{definition}
\newtheorem{definition}[theorem]{Definition}
\newtheorem{problem}[theorem]{Problem}
\newtheorem{problema}[theorem]{General Problem}
\newtheorem{remark}[theorem]{Remark}
\numberwithin{equation}{section}
\begin{document}

\vspace{0.5in}

\renewcommand{\bf}{\bfseries}
\renewcommand{\sc}{\scshape}
\vspace{0.5in}

\title[Extended $b$-metric-preserving functions]
{Extended $b$-metric-preserving functions\\}

\author{Reinaldo Mart\'{i}nez Cruz}
\address{Licenciatura en Matem\'aticas Aplicadas, UATX} 
\email{reinaldomar1964@hotmail.com}

\author{Marian Citlalli Cruz Cruz}
\address{Licenciatura en Matem\'aticas Aplicadas, UATX}
\email{maristar1943@gmail.com}

\subjclass[2010]{Primary 54X10, 58Y30, 18D35; Secondary 55Z10}

\keywords{Ultrametric, weak ultrametric, extended $b$-–metric.
}
\thanks {ALL references are real and correct; ALL citations are imaginary.}

\begin{abstract}
In this paper, we introduce a couple of classes of functions, denoted by $\mathcal{DU}$ and $\mathcal{EB}$. We present the relationship between them and other known classes. Also, we show that the elements of the class $\mathcal{EB}$, are amenable and quasi-subadditive functions (Theorem \ref{thEB}). Finally, in the Theorem  \ref{thm20}, we establish that the graphic of these elements is contained in the region proposed by J. Dobos and Z. Piotrowski(see \cite{D.P}).

\end{abstract}

\maketitle

\section{\bf Introduction}

Let $f:[0,\infty)\to [0,\infty)$. We said $f$ metric-preserving, if for each  metric space $(X,d)$,  $f\circ d$ is a metric.
This notion appears for the first time in the article \cite{W.A}  and from that moment on it is investigated by several authors; see for example, Bors\'ik and Dobous \cite{B.D},  Paul Corazza \cite{C.P}, Pongsriiam and Termwuttipong \cite{P.I},  Kamran and Samreen and Q. UL Ain \cite{T.Kam}, Khemaratchatakumthorn and  Pongsriiam \cite{T.K} and \cite{T.K.P}.

In this contex, the next question is natural.

\begin{problem}
Under what conditions on a function  $f:[0,\infty)\to [0,\infty)$
is it the case that for every metric space $(X,d)$, $f\circ d $ is still a metric?
\end{problem}

Curently the metric notion has several generalizations (and thus metric space); many of them has be obtained after to give a slight genaralization to triangle inequality ((M3) in Definition 2.1). For instance Czerwik in \cite{C.S} gives a weaker axiom than the triangular inequality and formally defines a $b$--metric space in order to generalize the Banach contraction mapping theorem.
Later Fagin in \cite{Fagin} discussed some kind of relaxation in triangular inequality and called this new distance measure as non-linear elastic mathing (NEM).

Similar type of relaxed triangle inequality was also used for trade
measure \cite{Cortelazzo} and to measure ice floes \cite{McConnell}.
All these applications intrigued and pushed us to introduce the concept of extended $b$--metric space.
So that the results obtained for such rich spaces become more viable in different directions of applications.

From what has been previously commented, we have the following general quesion.

\begin{problema}
Let $f:[0,\infty)\to [0,\infty)$ a function and

$\mathcal{P}\in \{\text{metric}, \text{ultrametric}, \text{weak ultrametric}, \text{extended $b$--metric}\}$.
Suppose that $(X,d)$ is such that $d\in \mathcal{P}$.
Under what conditions on the function  $f$
is it the case that $f\circ d\in \mathcal{P}$?
\end{problema}

J. Dobos and Z. Piotrowski in \cite{D.P} present a region in the plane, where the graph of any distance-preserving,
inter-valued function $f$ is contained.

In this paper, we introduce a couple of classes of functions
namely, the class of functions that:
(1) preserving the weak ultrametric, denoted by $\mathcal{DU}$ and  (2) preserve the extended $b$--metric, denoted by $\mathcal {EB}$.
In Proposition \ref{pro1}, we show that the collection $\mathcal{U}$ is contained in $\mathcal{DU}$.
Also, we will see that the class $\mathcal{DU}$ is contained in the class $\mathcal{EB}$ (see Theorem \ref{th2}).
From this fact, it follows that the family $\mathcal{EB}$ contains all the families given in \cite{T.K} and \cite {T.K.P}.
We show that Theorems \ref{th2}, \ref{thEB} and \ref{thG1} are generalizations of the results given in \cite{T.K} and \cite{T.K.P}.
Finally, with the Theorem \ref{thm20}, we verify that, the graph of any function with integer values that extended $b$--metric-preserving is in the region proposed by J. Dobos and Z. Piotrowski in \cite{D.P}.

\section{Preliminares}

With the purpose of making this work self-contained, we are going to briefly expose the results and definitions necessary for the reading of this work. The interested reader can consult the references \cite{T.Kam}, \cite{T.K} and \cite{T.K.P}.
\begin{definition}
Let $X$ be a nonempty set. A function $d: X \times X \to [0.\infty)$
is called a {\sl metric} if for all $x,y,z\in X$ it satisfies:

{\rm (M1)} $d(x,y)= 0$  if and only if $ x=y$,

{\rm (M2)} $d(x,y)=d(y,x)$,

{\rm (M3} $d(x,y)\leq d(x,z)+ d(z,y)$.

\end{definition}

It is well known that the notion of metric currently has various generalizations; among other:

\begin{definition}(\cite{P.Fraigniaud})\label{ultrame}
Let $X$ be a nonempty set and  $d: X \times X \to [0.\infty)$  a function. We say that

{\rm (A)} $d$ is a {\sl ultrametric} if for all $x,y,z\in X$ it satisfies:

{\rm (U1)} $d(x,y)= 0$  si  y  s\'olo  si $ x=y$,

{\rm (U2)} $d(x,y)=d(y,x)$,

{\rm (U3)} $d(x,y)\leq \max\{d(x,z), d(z,y)\}$.

{\rm (B)} $d$ is a {\sl weak ultrametric} if for all $x,y,z\in X$ it satisfies:
{\rm (U1)}, {\rm (U2)}  and

{\rm (I3)} there exists $C\geq 1$ such that  $d(x,y)\leq C\max\{d(x,z), d(z,y)\}$.

The pair $(X,d)$  is called a ultrametric space when $d$ is a ultrametric in $X$  (or $(X,d)$  is called a weak ultrametric space when $d$ is a weak ultrametric in $X$).
\end{definition}

Ultrametric spaces originate in the studio of $p$-adic numbers and
nonarchimedean analysis  \cite{S.B} y \cite{E.Y}, topology and dynamical
system \cite{M.C}, topological algebra \cite{J.P}, and theoretical
computer science \cite{S.P}.

\begin{definition}({\rm \cite{B.I.A}})\label{b-me}
Let $X$ be a nonempty set. A function $d: X \times X \to [0.\infty)$
is called a $b-$metric  if for all $x,y,z\in X$ it satisfies:

{\rm (B1)} $d(x,y)= 0$  if and only if $ x=y$,

{\rm (B2)} $d(x,y)=d(y,x)$,

{\rm (B3)} there exist $s\geq 1$ such that $d(x,y)\leq s(d(x,z)+ d(z,y))$ ($s-$triangle inequality).

If $d$ is a $b-$metric on $X$, then $(X,d)$ is called a $b-$metric space.
\end{definition}

\begin{remark}\label{ob0} It follows from the definition that:

{\rm (i)} every ultrametric space  $(X,d)$ is a weak ultrametic space,

{\rm (ii)} every metric space $(X,d)$ is a $b$--metric space.
\end{remark}

\begin{definition}({\rm \cite{T.Kam}})\label{b-me}
Let $X$ be a nonempty set and $\theta:X\times X\to [1,\infty)$.
A function $d_{\theta}: X \times X \to [0.\infty)$
is called an extended $b$--metric if for all $x,y,z\in X$ it satisfies:

{\rm ( $d_{\theta}1$ )} $ d_{\theta}(x,y)=0 $  if and only if $ x=y$,

{\rm ( $d_{\theta}2$ )} $ d_{\theta}(x,y)=d_{\theta}(y,x) $,

{\rm ( $d_{\theta}3$ )} $ d_{\theta}(x,z)\leq \theta(x,z) ( d_{\theta}(x,y)+ d_{\theta}(y,z) )$.
The pair $(X,d_{\theta})$ is called an extended $b$--metric space.
\end{definition}

\begin{remark}\label{ob2} If for any  $x,\ y\in X$,  $\theta(x,y)=s$, for some $s\geq 1$ then we obtain the definition of a $b$-metric space.
\end{remark}

The existence of several generalizations to the notion of metric along with the notion of function that preserves the metric, leads naturally to generalization of the concept of function that preserves the metric.

\begin{definition} Let $f:[0,\infty)\to [0,\infty)$ a function and

$\mathcal{P}\in \{\text{metric}, \text{ultrametric}, \text{weak ultrametric}\}$. We say that
 $f$ is $\mathcal{P}$-preserving,  if  for all metric space $(X,d)$, with $d\in\mathcal{P}$, $f\circ d\in\mathcal{P}$;

\end{definition}

In this paper we will focus our attention on the cases \text{ultrametric}, \text{weak ultrametrica} and \text{extended $b$--metric}. We will say that:

\medskip

$(A)$ $f$ {\sl ultrametric-preserving}, if for all ultrametric $(X,d)$ space, $f\circ d$ is an ultrametric, and we denote the set of all {\sl ultrametric-preserving-functions} to the class $\mathcal{U}$,

$(B)$ $f$ {\sl weak ultrametric-preserving}, if for all weak ultrametric $(X,d)$ space, $f\circ d$ is an weak ultrametric, and we denote the set of all {\sl weak ultrametric-preserving-functions} to the class $\mathcal{DU}$.

$(C)$ $f$ {\sl extended $b$--metric-preserving}, if for all extended $b$--metric $(X,d_{\theta})$ space, there exits an $\hat{\theta}:X\times X\to [1,\infty)$ such that $(f\circ d_{\theta})_{\hat{\theta}}$ is an extended $b$--metric, and we denote the set of all {\sl extended $b$--metric-preserving-functions} to the class $\mathcal{EB}$.

$(D)$ $f$ {\sl metric-preserving}, if for all metric $(X,d)$ space, $f\circ d$ is an metric, and we denote the set of all {\sl metric-preserving-functions} to the class $\mathcal{M}$,

$(E)$ $f$ {\sl $b$--metric-preserving}, if for all $b$-metric space $(X,d)$, $f\circ d$ is an $b$--metric on $X$, and we denote the set of all {\sl $b$--metric-preserving-functions} to the class $\mathcal{B}$,

$(F)$ $f$ {\sl metric-$b$--metric-preserving}, if for all metric spaces $(X,d)$, $f\circ d$ is a $b$-metric on $X$, and we denote the set of all {\sl metric-$b$--metric-preserving-functions} to the class $\mathcal{MB}$, and

$(G)$ $f$ {\sl $b$--metric-metric-preserving}, if for all $b$-metric spaces $(X,d)$, $f\circ d$ is a metric on $X$, and we denote the set of all {\sl $b$--metric-metric-preserving-functions} to the class $\mathcal{BM}$.
\medskip

The following results establishes the relationship between classes given in the previous definition.

\begin{theorem}\label{thkp}
The following relationship are satisfied.
\begin{enumerate}
\item (\cite{T.K}) $\mathcal{BM}\subseteq \mathcal{M}\subseteq \mathcal{B} \subseteq \mathcal{MB}$, $\mathcal{M}\not\subseteq \mathcal{BM}$ and $\mathcal{B}\not\subseteq \mathcal{M}$.
\item (\cite{T.K.P}) $\mathcal{MB}=\mathcal{B}$.
\item (\cite{T.K.P}) $\mathcal{B}=\mathcal{DU}$.
\end{enumerate}
\end{theorem}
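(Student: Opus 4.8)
The plan is to prove the three items in turn, using repeatedly that every metric space is a $b$-metric space (Remark \ref{ob0}(ii)) and that the constants $s$ in (B3) and $C$ in (I3) are existentially quantified, so they may always be enlarged.

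For item (1), the inclusions $\mathcal{BM}\subseteq\mathcal{M}$ and $\mathcal{B}\subseteq\mathcal{MB}$ are immediate, since a function with the prescribed behaviour on all $b$-metric spaces has it in particular on all metric spaces. For the two non-inclusions I would take $f=\mathrm{id}$ (metric-preserving, but sending any $b$-metric that is not a metric to itself, hence not in $\mathcal{BM}$) and $f(t)=t^2$ (which turns a $b$-metric of constant $s$ into one of constant $2s^2$ because $(u+v)^2\le 2(u^2+v^2)$, yet violates (M3) on $\mathbb{R}$, hence lies in $\mathcal{B}\setminus\mathcal{M}$). The substantive inclusion is $\mathcal{M}\subseteq\mathcal{B}$. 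Let $f\in\mathcal{M}$; testing $f$ on one- and two-point metric spaces gives amenability, so $f\circ d$ separates points and is symmetric for every $b$-metric $d$. I would then record two consequences of $f\in\mathcal{M}$: (a) for $0=p_0<p_1<\dots<p_k$ on the real line, $f(p_k)\le\sum_{i=1}^{k}f(p_i-p_{i-1})$, obtained by iterating (M3) for the metric $f\circ|\cdot|$; and (b) $f(t)\le 2f(a)$ whenever $0<t\le 2a$, by applying (M3) to the $3$-point metric space with distances $t,a,a$. Now let $d$ be a $b$-metric on $X$ with constant $s$ and put $n=\lceil 2s\rceil$; by symmetry of the three instances of (B3) for $f\circ d$ at a triple $x,y,z$, it suffices to show $f(c)\le s'(f(a)+f(b))$ whenever $c\le s(a+b)$, and we may assume $a\ge b$ (the case $a=0$ forcing $b=c=0$ and being trivial). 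Then $c\le s(a+b)\le 2sa\le na$; with $k=\lceil c/a\rceil\le n$ and the points $0,a,2a,\dots,(k-1)a,c$, (a) and then (b) give $f(c)\le(k-1)f(a)+f(c-(k-1)a)\le(n+1)f(a)\le(n+1)\bigl(f(a)+f(b)\bigr)$. So $f\circ d$ is a $b$-metric with constant $n+1$, which depends only on $s$, and $f\in\mathcal{B}$.

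For item (2), item (1) already gives $\mathcal{B}\subseteq\mathcal{MB}$, so only $\mathcal{MB}\subseteq\mathcal{B}$ remains; the same argument works, the one subtlety being that for $f\in\mathcal{MB}$ we must produce a $b$-metric constant for $f\circ d$ that is \emph{uniform} over the space. The remedy is to route the estimates (a) and (b) through fixed model spaces: because $(\mathbb{R},|\cdot|)$ and the Euclidean plane are single fixed spaces, for $f\in\mathcal{MB}$ the functions $f$ composed with these metrics are $b$-metrics with constants $s_0(f)$ and $s_1(f)$ depending on $f$ alone; iterating the $s_0$-relaxed triangle inequality along a path of at most $n$ points yields a polygon inequality with some constant $P(n,s_0)$, and the triangle with side lengths $t,a,a$ for $0<t\le 2a$ (which embeds isometrically in the Euclidean plane) gives $f(t)\le 2s_1 f(a)$. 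Re-running the computation of item (1) with these substitutions produces, for a $b$-metric of constant $s$, a $b$-metric $f\circ d$ whose constant depends only on $s$, $s_0(f)$, $s_1(f)$; hence $f\in\mathcal{B}$, and $\mathcal{MB}=\mathcal{B}$. For item (3) the essential point concerns spaces, not functions: since $\max(u,v)\le u+v\le 2\max(u,v)$, a weak ultrametric of constant $C$ is a $b$-metric of constant $C$, and a $b$-metric of constant $s$ is a weak ultrametric of constant $2s$; thus on a fixed set a function is a weak ultrametric if and only if it is a $b$-metric. As the constants in (I3) and (B3) are existentially quantified, ``$f\circ d$ is a weak ultrametric'' and ``$f\circ d$ is a $b$-metric'' are then equivalent for every such $d$, so $\mathcal{DU}=\mathcal{B}$.

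The step I expect to be the main obstacle is the uniformity issue in item (2): for $f\in\mathcal{MB}$ one obtains, a priori, a different $b$-metric constant for each metric space, and finite spaces carry no information beyond amenability, so the content has to be mined from infinite model spaces. Checking that finite sub-configurations of $(\mathbb{R},|\cdot|)$ and of the Euclidean plane realize exactly the inequalities (a) and (b) needed, and tracking how the polygon constant grows with the number of vertices, is the only genuinely technical part; everything else reduces to the elementary inequality $u+v\le 2\max(u,v)$ together with the (relaxed) triangle inequalities.
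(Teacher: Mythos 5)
Your proposal is correct, but note that the paper itself contains no proof of Theorem \ref{thkp}: all three items are quoted verbatim from \cite{T.K} and \cite{T.K.P}, so your argument is necessarily a different, self-contained route rather than a variant of anything in this manuscript. Your reconstruction is sound: the easy inclusions and the two counterexamples ($\mathrm{id}$ and $t\mapsto t^{2}$) are standard; for $\mathcal{M}\subseteq\mathcal{B}$ your facts (a) and (b) are legitimate (the three-point space with distances $t,a,a$ is a metric space exactly because $t\le 2a$), and the chaining through $0,a,2a,\dots,(k-1)a,c$ with $k=\lceil c/a\rceil\le\lceil 2s\rceil$ does give a $b$-metric constant for $f\circ d$ depending only on $s$, with the degenerate cases correctly disposed of. You also correctly identify the real issue in item (2), uniformity of the constant for $f\in\mathcal{MB}$, and your remedy --- extracting the constants $s_{0},s_{1}$ from the two fixed model spaces $(\mathbb{R},|\cdot|)$ and the Euclidean plane and rerunning the chain with the geometric loss $s_{0}^{k}$ --- closes it; the plane is genuinely needed for the $(t,a,a)$ configuration, which does not embed in the line unless $t=2a$. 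The cited papers reach the same conclusions by a slightly different decomposition, characterizing $\mathcal{MB}$ and $\mathcal{B}$ through amenability plus an $s$-triangle-triplet/quasi-subadditivity condition (the characterization this paper later re-derives as a corollary of its Theorem \ref{th6}); your model-space chaining buys a self-contained and explicitly quantitative proof, while the triplet characterization buys a cleaner reusable criterion. Item (3) via $\max(u,v)\le u+v\le 2\max(u,v)$, showing weak ultrametrics and $b$-metrics coincide as classes of distance functions with existentially quantified constants, is exactly the right observation.
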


Next, we will establish that all function which preserves the ultrametric, preserves the weak ultrametric too.

\begin{proposition}\label{pro1}
We have  $\mathcal{U}\subseteq \mathcal{DU}$.
\end{proposition}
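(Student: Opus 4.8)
The plan is to verify the three defining conditions of a weak ultrametric for $f\circ d$, where $(X,d)$ is an arbitrary weak ultrametric space and $C\ge 1$ is a constant witnessing condition (I3) for $d$. Conditions (U1) and (U2) should come essentially for free. Applying the hypothesis $f\in\mathcal{U}$ to the (trivially ultrametric) one-point space, and to the two-point space $\{p,q\}$ with $d(p,q)=t>0$, forces $f(0)=0$ and $f(t)>0$ for every $t>0$; that is, $f$ is amenable, whence $f(d(x,y))=0\iff d(x,y)=0\iff x=y$. Symmetry of $f\circ d$ is immediate from symmetry of $d$.

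The next step is to deduce monotonicity of $f$ from membership in $\mathcal{U}$. Given $0<a\le b$, consider the three-point space $X=\{p,q,r\}$ with $d(p,q)=a$, $d(p,r)=d(r,q)=b$, and $d$ zero on the diagonal; a direct check shows that all three instances of (U3) hold, so $(X,d)$ is an ultrametric space. Since $f\in\mathcal{U}$, $f\circ d$ is an ultrametric, and the instance of (U3) for the pair $\{p,q\}$ reads $f(a)\le\max\{f(b),f(b)\}=f(b)$. Together with $f(0)=0$, this shows that $f$ is non-decreasing on $[0,\infty)$.

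With $f$ amenable and non-decreasing in hand, I would turn to (I3) for $f\circ d$. Fix $x,y,z\in X$ and set $m=\max\{d(x,z),d(z,y)\}$, so that $d(x,y)\le Cm$. Applying monotonicity of $f$ twice, $f(d(x,y))\le f(Cm)=\max\{f(C\,d(x,z)),\,f(C\,d(z,y))\}$. Hence it suffices to exhibit a constant $C'\ge 1$, depending only on $f$ and $C$ (and, if need be, on the space), with $f(Ct)\le C'f(t)$ for every $t>0$ in the range of $d$; this would give $f(d(x,y))\le C'\max\{f(d(x,z)),f(d(z,y))\}$, which is precisely (I3) for $f\circ d$ with relaxation constant $C'$.

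The main obstacle is this last estimate: passing from $f$ evaluated at the inflated argument $Ct$ back to $f(t)$ up to a fixed multiplicative factor. This is a doubling-type growth condition on $f$ at scale $C$ (equivalently, a quasi-subadditivity property), and it is not an immediate formal consequence of the definition of $\mathcal{U}$, so the core of the proof must be an argument that every amenable, non-decreasing, ultrametric-preserving function automatically obeys such a bound. I expect essentially all the work of the Proposition to be concentrated there. (An alternative packaging is to invoke Theorem~\ref{thkp}(3) and reduce the claim to $\mathcal{U}\subseteq\mathcal{B}$, but this leaves the same growth-control question to be settled.)
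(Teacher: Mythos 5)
Your plan stalls at exactly the right place, but the gap you flag at the end is not just unproven --- it is unfillable. Your own one-, two- and three-point constructions show that membership in $\mathcal{U}$ forces precisely amenability and monotonicity, and conversely every amenable nondecreasing $f$ lies in $\mathcal{U}$ (since $f(\max\{a,b\})=\max\{f(a),f(b)\}$ for nondecreasing $f$); so $\mathcal{U}$ imposes no growth control at all, and the doubling estimate $f(Ct)\le C'f(t)$ you need cannot be extracted from it. Concretely, take $f(t)=e^{t}-1$, which is amenable and nondecreasing, hence $f\in\mathcal{U}$. The usual metric $d$ on $\mathbb{R}$ is a weak ultrametric with $C=2$, yet for the points $0,n,2n$ one has $(f\circ d)(0,2n)=e^{2n}-1$ while $\max\{(f\circ d)(0,n),(f\circ d)(n,2n)\}=e^{n}-1$, and the ratio is unbounded in $n$; so no constant can witness (I3) for $f\circ d$, and this $f$ is not weak-ultrametric-preserving in the sense of the paper's definition of $\mathcal{DU}$. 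Thus your third step cannot be completed by any argument: with $\mathcal{DU}$ quantified over all weak ultrametric spaces, the inclusion $\mathcal{U}\subseteq\mathcal{DU}$ fails (equivalently, via Theorem~\ref{thkp}(3), $\mathcal{U}\not\subseteq\mathcal{B}$, since $e^{t}-1$ is not quasi-subadditive). Your fallback route through Theorem~\ref{thkp}(3) meets the same obstruction, as you suspected.

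It is worth seeing how the paper avoids (rather than solves) this difficulty. Its proof of Proposition~\ref{pro1} takes $d$ to be an \emph{ultrametric}, observes that $f\circ d$ is then an ultrametric, and concludes it is a weak ultrametric with $C=1$. That only shows that $f$ sends ultrametrics to weak ultrametrics; it never addresses an arbitrary weak ultrametric base space, which is what the definition of $\mathcal{DU}$ demands and where you correctly began. So your reading of the statement is the faithful one, and your attempt breaks exactly where the statement itself breaks; the proposition survives only under the weaker reading the paper's proof implicitly uses (``ultrametric-to-weak-ultrametric preserving''), for which the observation $C=1$ already suffices and none of your machinery is needed.
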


\begin{proof}
Let $f\in \mathcal{U}$, then for each ultrametric space $(X,d)$, $f\circ d$ is a ultrametric.
By Remarks \ref{ob0}, $(X,d)$ is a weak ultrametric space.
Suppose $f\circ d$ is not a weak ultrametric in $X$, then for each $C\geq 1$ there exist points $x,y,z\in X$ such that
\[
(f\circ d)(x,y)>C\max\{(f\circ d)(x,z), (f\circ d)(z,y)\}.
\]
In particular, for $C=1$, there exist points $x,y,z\in X$ such that
\[
(f\circ d)(x,y)>\max\{(f\circ d)(x,z), (f\circ d)(z,y)\}.
\]
We obtained $(X,f\circ d)$ is not ultrametric.
\end{proof}

As a consequence of Proposition \ref{pro1} and Theorem \ref{thkp}, we obtain the following result.

\begin{corollary}\label{cor15}
$\mathcal{U}\subseteq \mathcal{DU}=\mathcal{B}=\mathcal{MB}$.
\end{corollary}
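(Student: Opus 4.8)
The plan is to obtain the corollary purely by chaining together the statement just proved and the identities imported in Theorem~\ref{thkp}; no new construction is required, and the entire content is bookkeeping of inclusions. First I would record the left-hand inclusion, which is already in hand: Proposition~\ref{pro1} says precisely that $\mathcal{U}\subseteq\mathcal{DU}$.

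Next I would assemble the chain of equalities. Theorem~\ref{thkp}(3) gives $\mathcal{B}=\mathcal{DU}$ and Theorem~\ref{thkp}(2) gives $\mathcal{MB}=\mathcal{B}$; by transitivity of equality these combine to $\mathcal{DU}=\mathcal{B}=\mathcal{MB}$. Splicing this after the inclusion furnished by Proposition~\ref{pro1} yields $\mathcal{U}\subseteq\mathcal{DU}=\mathcal{B}=\mathcal{MB}$, which is exactly the assertion of the corollary.

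The only point requiring care is not to over-invoke Theorem~\ref{thkp}: part (1) (the metric-side chain $\mathcal{BM}\subseteq\mathcal{M}\subseteq\mathcal{B}\subseteq\mathcal{MB}$ together with the two non-inclusions) plays no role here, and the argument rests solely on parts (2) and (3) of that theorem together with Proposition~\ref{pro1}. Consequently there is no genuine obstacle in this corollary itself: whatever difficulty there was has already been discharged in the proof of Proposition~\ref{pro1} (reducing the failure of the weak-ultrametric inequality, specialized to $C=1$, to the failure of the ultrametric inequality) and inside the cited results making up Theorem~\ref{thkp}.
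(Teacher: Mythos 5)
Your proposal is correct and is exactly the paper's argument: the paper presents the corollary as an immediate consequence of Proposition~\ref{pro1} (giving $\mathcal{U}\subseteq\mathcal{DU}$) together with parts (2) and (3) of Theorem~\ref{thkp} (giving $\mathcal{MB}=\mathcal{B}=\mathcal{DU}$). Nothing further is needed.
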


From now on, we will focus our attention on the class $\mathcal{EB}$.
In the next result we establish the relationship that there exist between this class and the others given above.

\begin{theorem}\label{th2}
The $\mathcal{EB}$ class contains to $\mathcal{B}$ class.
\end{theorem}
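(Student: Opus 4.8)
The plan is to show directly that an arbitrary $f \in \mathcal{B}$ also lies in $\mathcal{EB}$. So fix $f \in \mathcal{B}$ and let $(X, d_\theta)$ be an arbitrary extended $b$-metric space, with witness function $\theta : X \times X \to [1,\infty)$. I need to produce some $\hat\theta : X \times X \to [1,\infty)$ for which $(f \circ d_\theta)_{\hat\theta}$ satisfies $(d_{\hat\theta}1)$, $(d_{\hat\theta}2)$, and $(d_{\hat\theta}3)$. The first two axioms are immediate and should be dispatched first: $(f\circ d_\theta)(x,y) = 0 \iff d_\theta(x,y) = 0 \iff x = y$ uses only that $f$, being $b$-metric-preserving, satisfies $f(t) = 0 \iff t = 0$ (this is part of what $f \in \mathcal{B}$ buys us, since $f\circ d$ must be a $b$-metric whenever $d$ is); symmetry is inherited verbatim from $d_\theta$.

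The substance is axiom $(d_{\hat\theta}3)$, i.e.\ constructing $\hat\theta$ and verifying
\[
(f\circ d_\theta)(x,z) \le \hat\theta(x,z)\bigl((f\circ d_\theta)(x,y) + (f\circ d_\theta)(y,z)\bigr)
\]
for all $x,y,z \in X$. The idea is to reduce to the $b$-metric case. Since $\theta$ need not be bounded, one cannot in general replace $d_\theta$ by a genuine $b$-metric globally; instead I would work pointwise. Fix a triple $x,y,z$. If any two of the points coincide the inequality is trivial or reduces to a two-point statement, so assume they are distinct. Then $(f\circ d_\theta)(x,y)$ and $(f\circ d_\theta)(y,z)$ are both strictly positive (again using $f(t) = 0 \iff t = 0$), so the quantity
\[
\hat\theta(x,z) := \sup\left\{ \frac{(f\circ d_\theta)(x,z)}{(f\circ d_\theta)(x,y) + (f\circ d_\theta)(y,z)} : y \in X \right\}
\]
is well-defined in $[0,\infty]$; I would set $\hat\theta(x,z) := \max\{1, \text{that sup}\}$. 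The real work is to show this supremum is \emph{finite}. For that I invoke $f \in \mathcal{B} = \mathcal{MB}$ (Theorem \ref{thkp}(2)) together with $\mathcal{B} = \mathcal{DU}$ (Theorem \ref{thkp}(3)): the known structural consequences of being $b$-metric-preserving — in particular that such $f$ is amenable and quasi-subadditive, i.e.\ there is a constant $K = K_f \ge 1$ with $f(u+v) \le K(f(u)+f(v))$ for all $u,v \ge 0$ — are exactly what is needed. Applying quasi-subadditivity of $f$ to the extended-$b$ inequality $d_\theta(x,z) \le \theta(x,z)(d_\theta(x,y)+d_\theta(y,z))$ and using that $f$ may be taken nondecreasing on the relevant range gives
\[
(f\circ d_\theta)(x,z) = f\bigl(d_\theta(x,z)\bigr) \le f\bigl(\theta(x,z)(d_\theta(x,y)+d_\theta(y,z))\bigr),
\]
and then a further application of quasi-subadditivity (splitting the argument $\theta(x,z) u$ into $\lceil\theta(x,z)\rceil$ copies of $u$, or using amenability to absorb the multiplicative constant) bounds this by $C(\theta(x,z))\bigl(f(d_\theta(x,y)) + f(d_\theta(y,z))\bigr)$ for a finite constant $C(\theta(x,z))$ depending only on $\theta(x,z)$ and $K_f$. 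Hence $\hat\theta(x,z) \le \max\{1, C(\theta(x,z))\} < \infty$, which is exactly what we need.

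The main obstacle I anticipate is the unboundedness of $\theta$: we cannot choose a single global constant, so $\hat\theta$ must genuinely be allowed to vary with the pair $(x,z)$, and one must be careful that the bound $C(\theta(x,z))$ depends only on $\theta(x,z)$ and not on the intermediate point $y$ — this is where quasi-subadditivity (a property of $f$ alone, independent of the space) does the essential decoupling. A secondary point to handle cleanly is the degenerate case where $d_\theta(x,y)+d_\theta(y,z)$ is small while $d_\theta(x,z)$ is comparatively large; because $f$ amenable forces $f(t)>0$ for $t>0$, the ratio stays controlled, but the write-up should address the two-point sub-case ($y = x$ or $y = z$) separately to keep the supremum manifestly finite. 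Once finiteness of $\hat\theta$ is established, the verification of $(d_{\hat\theta}3)$ is immediate from the definition of $\hat\theta$ as a supremum, and the proof concludes that $f \in \mathcal{EB}$.
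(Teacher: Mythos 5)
Your overall strategy (a pointwise $\hat\theta(x,z)$ depending only on $\theta(x,z)$ and on $f$) is reasonable, and in fact it engages the statement more directly than the paper's own proof, which simply takes $\hat\theta\equiv s$ (the quasi-subadditivity constant) and verifies the defining inequality only for the usual metric on $\mathbb{R}$ at the points $0$, $x$, $x+y$. However, your argument has a genuine gap at its central step: you pass from $d_\theta(x,z)\le\theta(x,z)\bigl(d_\theta(x,y)+d_\theta(y,z)\bigr)$ to $f(d_\theta(x,z))\le f\bigl(\theta(x,z)(d_\theta(x,y)+d_\theta(y,z))\bigr)$ ``using that $f$ may be taken nondecreasing on the relevant range.'' Membership in $\mathcal{B}$ does not give monotonicity, not even locally (this is exactly why Theorem \ref{thG1} has to assume ``increasing'' as an extra hypothesis, and why the paper leaves open whether amenable $+$ subadditive implies membership in $\mathcal{EB}$): amenability and quasi-subadditivity never compare $f(A)$ with $f(B)$ for $A\le B$. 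So the bound $C(\theta(x,z))\bigl(f(d_\theta(x,y))+f(d_\theta(y,z))\bigr)$, and with it the finiteness of your supremum, is not established. A secondary weakness is the claim that amenability (``$f(t)>0$ for $t>0$'') keeps the ratio controlled: finiteness of the supremum over $y$ requires a lower bound on $f(d_\theta(x,y))+f(d_\theta(y,z))$ that is \emph{uniform} in $y$, and pointwise positivity of $f$ does not give one.

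The gap can be closed, but with a different tool: the triangle-triplet characterization of $\mathcal{B}=\mathcal{MB}$ (Theorem \ref{thkp} together with the criterion of Khemaratchatakumthorn--Pongsriiam cited as {\rm \cite[Theorem 17]{T.K}}): there is $s\ge 1$ such that $(f(u),f(v),f(w))\in\triangle_s$ whenever $(u,v,w)\in\triangle$. Put $a=d_\theta(x,z)$, $b=d_\theta(x,y)$, $c=d_\theta(y,z)$, $m=\max\{b,c\}$, so $a\le 2\theta(x,z)m$. Since $(a,\tfrac a2,\tfrac a2)\in\triangle$, the triplet condition gives $f(a)\le 2s\,f(a/2)$, and iterating $j=\lceil\log_2\theta(x,z)\rceil$ times yields $f(a)\le(2s)^j f(a/2^j)$ with $a/2^j\le 2m$; since $(a/2^j,m,m)\in\triangle$, one more application gives $f(a/2^j)\le 2s\,f(m)\le 2s\,(f(b)+f(c))$. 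Altogether $f(a)\le(2s)^{\lceil\log_2\theta(x,z)\rceil+1}\bigl(f(b)+f(c)\bigr)$, so $\hat\theta(x,z):=(2s)^{\lceil\log_2\theta(x,z)\rceil+1}$ works, the bound is independent of $y$, and the supremum construction is not even needed. Note that this uses the full triplet condition, not merely the amenable-plus-quasi-subadditive consequences you invoked, which are strictly weaker for non-monotone $f$.
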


\begin{proof} Let $f\in \mathcal{B}$ and let $d_{\theta}$  be a extended $b-$metric on a space $X$.
We will show that, exist $\hat{\theta}:X\times X\to [1,\infty)$ such tha $(f\circ d_{\theta})_{\hat{ \theta}}$ is an extended $b-$metric on $X$.

Since $f\in \mathcal{B}$, then  $f$ is amenable y quasi-subadditive (see  Theorem \ref{thkp} and {\rm \cite[Theorem 20]{T.K}}.
Therefore for every $x,y\in X$,
\[
(f\circ d_{\theta})_{\hat{ \theta}}(x,y)=0\quad {\rm if}\quad {\rm and}\quad{\rm only}\quad{\rm if}\quad x=y.
\]
The condition $d_{\theta}2$ is obvious. So it remains to show that $(f\circ d_{\theta})_{\hat{ \theta}}$ satisfies $d_{\theta}3$.
Since $f$ is quasi-subaditive, there exists a constant $s\geq 1$ such that
\[
f(x+y)\leq s(f(x) + f(y)) \quad{\rm for}\quad{\rm all}\quad x,y\in [0,\infty).
\]

Let $\theta:X\times X\to [1,\infty)$  defined by
$\theta(x,y)= s$ for every  $x,y\in X$.
Let $d_{\theta}$ the usual metric on $\mathbb{R}$.
\[
\hspace{-37mm}(f\circ d_{\theta})_{\hat{\theta}}(0,x+y)=f(x+y)
\]
\[
\leq s(f(x)+f(y))
\]
\[
\hspace{8mm}\leq \theta (x,z)(f(x) + f(y))
\]
\[
\hspace{33mm}=\theta(x,y)(f(d_{\theta}(0,x)) + f(d_{\theta}(x+y,x)))
\]
\[
\hspace{43mm}=\theta(x,y)((f\circ d_{\theta})_{\hat{\theta}}(0,x) + (f\circ d_{\theta})_{\hat{\theta}}(x+y,x)).
\]
\end{proof}

\begin{corollary}
If $\mathcal{P}\in\{\mathcal{M}, \mathcal{U}, \mathcal{DU}, \mathcal{MB}\}$, then $\mathcal{P}\subseteq \mathcal{EB}$.
\end{corollary}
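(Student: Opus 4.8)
The plan is to reduce every one of the four cases to the single inclusion $\mathcal{B}\subseteq\mathcal{EB}$ already proved in Theorem \ref{th2}, by invoking the coincidences and containments among the classes recorded in Theorem \ref{thkp} and Corollary \ref{cor15}. So the proof is essentially a bookkeeping argument: no new space needs to be constructed and no new inequality needs to be verified.

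First I would collect the relevant facts. By Theorem \ref{thkp}(1) we have $\mathcal{M}\subseteq\mathcal{B}$; by Theorem \ref{thkp}(2) we have $\mathcal{MB}=\mathcal{B}$; by Theorem \ref{thkp}(3) we have $\mathcal{DU}=\mathcal{B}$; and by Corollary \ref{cor15} (equivalently, Proposition \ref{pro1} together with Theorem \ref{thkp}(3)) we have $\mathcal{U}\subseteq\mathcal{DU}=\mathcal{B}$. Thus in each of the four cases $\mathcal{P}\in\{\mathcal{M},\mathcal{U},\mathcal{DU},\mathcal{MB}\}$ one has $\mathcal{P}\subseteq\mathcal{B}$.

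Then I would apply Theorem \ref{th2}, which gives $\mathcal{B}\subseteq\mathcal{EB}$, and chain the two inclusions to conclude $\mathcal{P}\subseteq\mathcal{B}\subseteq\mathcal{EB}$ for every such $\mathcal{P}$, which is exactly the assertion of the corollary.

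There is no real obstacle here: the substantive work lies in Theorem \ref{th2} (where one shows that a $b$-metric-preserving $f$, being amenable and quasi-subadditive with constant $s$, yields an extended $b$-metric $(f\circ d_\theta)_{\hat\theta}$ by taking $\hat\theta\equiv s$), together with the cited identifications in Theorem \ref{thkp}. If one insisted on a self-contained argument for a particular case, say $\mathcal{U}\subseteq\mathcal{EB}$, one would simply re-run the proof of Theorem \ref{th2} using that an ultrametric-preserving (hence amenable and subadditive) $f$ again produces an extended $b$-metric with $\hat\theta\equiv 1$; but this repetition is unnecessary given the diagram of classes already at our disposal.
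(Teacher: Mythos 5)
Your proof is correct and is exactly the argument the paper intends: the corollary is stated without proof precisely because it follows by combining $\mathcal{M}\subseteq\mathcal{B}$, $\mathcal{U}\subseteq\mathcal{DU}=\mathcal{B}=\mathcal{MB}$ (Theorem \ref{thkp} and Corollary \ref{cor15}) with $\mathcal{B}\subseteq\mathcal{EB}$ (Theorem \ref{th2}), just as you do. Your closing aside claiming an ultrametric-preserving $f$ is subadditive is not quite right (such $f$ are amenable and increasing, not necessarily subadditive), but that remark is inessential and does not affect the main chain of inclusions.
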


\begin{definition}

Let $f:[0,\infty)\to [0,\infty)$. We said

{\rm (a)} (\cite{D}) $f$ is amenable if and only if $f^{-1}(\{0\})=\{0\}$,

{\rm (b)} (\cite{Rosenbaum}) $f$ is subadditive if  for all $x,y\in [0,\infty)$,   $f(x+y)\leq f(x)+f(y)$,

{\rm (c) } (\cite{T.K})$f$ is $quasi$-subadditive if there exists $s\geq 1$
 such that $f(a + b)\leq s(f(a) + f(b))$ for all $a, b\in [0, \infty)$.
\end{definition}

\begin{definition}
{\rm a)}(\cite{Terpe})   A triangle triplet, is a triple $(a, b, c)$ con $a, b, c\geq 0$ such that $a\leq b+c$, $b\leq a+c$ y $c\leq a+b$,

{\rm b)}(\cite{T.K.P}) Let $s\geq 1$ and $a, b, c \geq 0$. A triple $(a, b, c)$ is said to be an $s$-triangle triplet if $a \leq s(b + c)$, $b\leq s(a + c)$, and $c \leq s(a + b)$.

{\rm c)} Let $\theta:X\times X\to [1,\infty)$ and $a, b, c \geq 0$. A triple $(a, b, c)$ is said to be an $\theta$-triangle triplet if $a \leq \theta(x,y)(b + c)$, $b\leq \theta(x,z)(a + c)$, and $c \leq \theta(z,y)(a + b)$, for all $x,y,z\in X$.

We denote for $\triangle$, $\triangle_{s}$ and $\triangle_{\theta}$ be the set of all triangle triplets, $s$-triangle triplets and $\theta$-triangle triplets respectively.
\end{definition}

The following statements are easy to verify.

\begin{remark}\label{obs5}

{\rm (i)} If $f:[0,\infty)\to [0,\infty)$ is subadditive,
 then  $f$ is $quasi$-subadditive.

{\rm (ii)} For all $(a, b, c)\in \triangle_{s}$, we obtain $(a, b, c)\in \triangle_{\theta}$.
\end{remark}

In the following result, we will show that the elements of the $\mathcal{EB}$ class satisfy to be amenable and quasi-subadditive.

\begin{theorem}\label{thEB}
If $f\in \mathcal{EB}$ then $f$ is a amenable and quasi-subadditive.
\end{theorem}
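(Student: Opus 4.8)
The plan is to extract the two conditions separately by feeding $f$ the right kind of test spaces. For amenability, first note that $f^{-1}(\{0\})=\{0\}$ means two things: $f(0)=0$ and $f(t)>0$ for every $t>0$. To get $f(0)=0$, apply the hypothesis $f\in\mathcal{EB}$ to the one-point space; more usefully, take any extended $b$-metric space $(X,d_\theta)$ with at least one pair $x\neq y$ (for instance $X=\{0,1\}$ with the usual distance, which is a metric, hence an extended $b$-metric), and use $(f\circ d_\theta)_{\hat\theta}(x,x)=f(d_\theta(x,x))=f(0)$; since $(f\circ d_\theta)_{\hat\theta}$ is an extended $b$-metric, axiom $(d_\theta 1)$ forces $f(0)=0$. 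For the converse direction ($f(t)>0$ when $t>0$), suppose $f(t_0)=0$ for some $t_0>0$ and build an extended $b$-metric space that realizes the value $t_0$ on a non-diagonal pair — e.g. $X=\{x,y\}$ with $d_\theta(x,y)=t_0$. Then $(f\circ d_\theta)_{\hat\theta}(x,y)=f(t_0)=0$ with $x\neq y$, contradicting $(d_\theta 1)$ for $(f\circ d_\theta)_{\hat\theta}$. Hence $f$ is amenable.

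For quasi-subadditivity we need a single constant $s\geq 1$ with $f(a+b)\leq s(f(a)+f(b))$ for all $a,b\geq 0$. The natural test space is a three-point ``path'' space: given $a,b\geq 0$ pick $X=\{u,v,w\}$ and define $d(u,v)=a$, $d(v,w)=b$, $d(u,w)=a+b$, which is (an honest metric, hence) an extended $b$-metric with $\theta\equiv 1$. Since $f\in\mathcal{EB}$, there is some $\hat\theta\colon X\times X\to[1,\infty)$ making $(f\circ d)_{\hat\theta}$ an extended $b$-metric; writing $s_0=\hat\theta(u,w)\geq 1$, axiom $(d_\theta 3)$ applied to the triple $(u,v,w)$ gives
\[
f(a+b)=(f\circ d)_{\hat\theta}(u,w)\leq \hat\theta(u,w)\bigl((f\circ d)_{\hat\theta}(u,v)+(f\circ d)_{\hat\theta}(v,w)\bigr)=s_0\bigl(f(a)+f(b)\bigr).
\]
The obstacle — and the only real subtlety — is that a priori $s_0$ depends on $a$ and $b$ (indeed on the particular three-point space chosen), whereas quasi-subadditivity demands one $s$ that works uniformly. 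To fix this I would package \emph{all} pairs at once: build one extended $b$-metric space large enough that every inequality $f(a+b)\leq \hat\theta(\cdot,\cdot)(f(a)+f(b))$ is witnessed inside it, so that a \emph{single} function $\hat\theta$ (guaranteed by $f\in\mathcal{EB}$ for that one space) bounds them all — but $\hat\theta$ takes values in $[1,\infty)$, not necessarily bounded, so even this does not immediately yield a finite uniform $s$.

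The way around this is the standard metric-preserving trick: pass through a disjoint-union / amalgamation construction on which $\hat\theta$ is forced to be bounded, or — cleaner — invoke the structure already available. Since every metric space is an extended $b$-metric space (Remark \ref{ob0}(ii) together with Remark \ref{ob2}), and for $f\in\mathcal{EB}$ the composite $(f\circ d)_{\hat\theta}$ on such a space is an extended $b$-metric, one reduces the uniformity question to a property of $f$ alone. Concretely, I would argue by contradiction: if no uniform $s$ works, choose sequences $a_n,b_n$ with $f(a_n+b_n)>n\,(f(a_n)+f(b_n))$, assemble the countably many three-point spaces above into one big metric space $X=\bigsqcup_n X_n$ (gluing base points, with inter-component distances chosen large), note $X$ is an extended $b$-metric space, and derive that no finite-valued $\hat\theta$ can satisfy $(d_\theta 3)$ simultaneously on all components unless $\sup_n \hat\theta$-values blow up in a way incompatible with $\hat\theta\colon X\times X\to[1,\infty)$ being well-defined on the finitely many ``long'' edges — this is exactly the place where a careful choice of the gluing distances (large enough to dominate the $f$-values but keeping $X$ a genuine metric space) does the work. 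I expect this amalgamation step to be the heart of the argument; everything else is bookkeeping with the axioms $(d_\theta 1)$–$(d_\theta 3)$.
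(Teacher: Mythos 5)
Your amenability argument and the pointwise inequality $f(a+b)\le s_0(f(a)+f(b))$ with $s_0=\hat{\theta}(u,w)$ are correct, and up to that point you are doing essentially what the paper does (it uses the single test space $(\mathbb{R},\vert x-y\vert)$ and the triple $0,a,a+b$, which is your three-point path sitting inside the line). The genuine gap is exactly the step you flag and then try to repair: passing from a constant that depends on the pair $(a,b)$ to one uniform $s$. The amalgamation cannot deliver this. In the definition of an extended $b$-metric the function $\hat{\theta}\colon X\times X\to[1,\infty)$ is arbitrary: it need not be bounded, and its value is chosen pair by pair. On the glued space $X=\bigsqcup_n X_n$ one simply defines $\hat{\theta}(u_n,w_n)$ to be any number at least $f(a_n+b_n)/(f(a_n)+f(b_n))$ (finite in your contradiction setup, since there $f(a_n)+f(b_n)>0$), and arbitrarily large values on all other pairs; no choice of long inter-component distances imposes an upper bound on $\hat{\theta}$ anywhere, so $(d_{\theta}3)$ can always be satisfied and no contradiction appears. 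So the step you yourself call the heart of the argument is missing, and this route cannot supply it.

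You should also know that the paper's own proof stops exactly where your difficulty begins: it sets $s=\theta(0,a+b)$, a quantity depending on $a$ and $b$, and concludes quasi-subadditivity, although the paper's definition requires a single $s$ valid for all $a,b$. So the uniformity problem you isolated is real and is not resolved in the paper either; in fact it cannot be resolved with the definitions as written. If $f$ is increasing and amenable with $f(t)>0$ for $t>0$ (e.g.\ $f(t)=e^{t}-1$), then for any extended $b$-metric $d_{\theta}$ and $x\neq z$ every $y$ satisfies $d_{\theta}(x,y)+d_{\theta}(y,z)\ge d_{\theta}(x,z)/\theta(x,z)$, hence $f(d_{\theta}(x,y))+f(d_{\theta}(y,z))\ge f\bigl(d_{\theta}(x,z)/(2\theta(x,z))\bigr)>0$, so taking $\hat{\theta}(x,z)=\max\bigl\{1,\ f(d_{\theta}(x,z))/f\bigl(d_{\theta}(x,z)/(2\theta(x,z))\bigr)\bigr\}$ makes $f\circ d_{\theta}$ an extended $b$-metric; thus $e^{t}-1\in\mathcal{EB}$ even though no single $s$ makes it quasi-subadditive. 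Consequently the only inequality that either your computation or the paper's actually establishes is the pair-dependent one, $f(a+b)\le\hat{\theta}(0,a+b)(f(a)+f(b))$, and the uniform claim would need a different (or weakened) statement rather than a cleverer gluing.
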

\begin{proof}

Assume $f\in \mathcal{EB}$ y let $d_{\theta}(x,y)=\vert y-x \vert$ for all $x,y\in \mathbb{R}$.
Then $(f\circ d_{\theta})_{\hat{ \theta}}$ is an extended $b-$metric on $\mathbb{R}$.
Then
\[
f(0)= f(d_{\theta}(0,0))=(f\circ d_{\theta})_{\hat{\theta}}(0,0)=0.
\]
Suppose $x\in [0,\infty)$ and $f(x)=0$.
Then
\[
0=f(x)=f(d_{\theta}(0,x))= (f\circ d_{\theta})_{\hat{ \theta}}(0,x).
\]
Since $(f\circ d_{\theta})_{\hat{ \theta}}(0,x)=0$ and $(f\circ d_{\theta})_{\hat{ \theta}}$ is a extended $b$--metric, we have $x=0$.
This shows that $f$ is amenable. Next, since $f$ extended $b$--metric-preserving,  there exists $\hat{\theta}:X\times X\to [1,\infty)$ such that, for all $x,y,z\in \mathbb{R}$,
\[
(f\circ d_{\theta})_{\hat{ \theta}}(x,y)\leq \theta(x,y)((f\circ d_{\theta})_{\hat{ \theta}}(x,z)+(f\circ d_{\theta})_{\hat{ \theta}}(z,y)).
\]
To show that $f$ is quasi-subadditive, lets $a,b\in [0,\infty)$ and $\theta(0,a+b)=s$. We have $s\geq 1$ and
\[
\hspace{-57mm}f(a+b)=f(d_{\theta}(0,a+b))
\]
\[
\hspace{-38mm}= (f\circ d_{\theta})_{\hat{ \theta}}(0,a+b)
\]
\[
\hspace{6mm}\leq \theta(0,a+b)((f\circ d_{\theta})_{\hat{ \theta}}(0,a)+(f\circ d_{\theta})_{\hat{ \theta}}(a,a+b))
\]
\[
\hspace{-42mm}=s(f(a)+f(b)).
\]
This is $f(a+b)\leq s(f(a)+f(b))$. This shows that $f$ is quasi-subadditive.
\end{proof}

\begin{corollary}\label{corEB}
If $f\in \mathcal{B}$ then $f$ is a  quasi-subadditive.
\end{corollary}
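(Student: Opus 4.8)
The plan is to obtain this as an immediate consequence of the two preceding results, with no new computation. First I would invoke Theorem~\ref{th2}, which gives the inclusion $\mathcal{B}\subseteq\mathcal{EB}$; hence any $f\in\mathcal{B}$ is in particular extended $b$--metric-preserving. Then Theorem~\ref{thEB} applies verbatim to $f$: every element of $\mathcal{EB}$ is amenable and quasi-subadditive, so $f$ is quasi-subadditive. That is the entire argument, and there is essentially no obstacle, since both ingredients have already been established.

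For a reader who wants a self-contained derivation not routed through $\mathcal{EB}$, one could instead mimic the proof of Theorem~\ref{thEB} directly in the $b$--metric setting. Put $d(x,y)=|y-x|$ on $\mathbb{R}$, which is a $b$--metric (indeed a metric, so (B3) holds with $s=1$). Since $f\in\mathcal{B}$, the composite $f\circ d$ is a $b$--metric on $\mathbb{R}$, so by (B3) there is a constant $s\geq 1$, attached to the space $(\mathbb{R},f\circ d)$, with
\[
(f\circ d)(x,y)\leq s\bigl((f\circ d)(x,z)+(f\circ d)(z,y)\bigr)\qquad\text{for all }x,y,z\in\mathbb{R}.
\]
Evaluating at $x=0$, $z=a$, $y=a+b$ for arbitrary $a,b\geq 0$ and using $d(0,a)=a$, $d(a,a+b)=b$, $d(0,a+b)=a+b$ gives $f(a+b)\leq s\bigl(f(a)+f(b)\bigr)$, which is exactly quasi-subadditivity with the same $s$.

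I would present the first, two-line proof in the paper. The only point worth stressing in either version is that the constant $s$ does not depend on the chosen triple of points — it is furnished once and for all by the $b$--metric axiom for the space $(\mathbb{R},f\circ d)$ — so no step here is genuinely hard; the substance lies in Theorems~\ref{th2} and~\ref{thEB}.
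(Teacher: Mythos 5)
Your main argument is exactly the paper's proof: apply Theorem~\ref{th2} to get $f\in\mathcal{EB}$ and then Theorem~\ref{thEB} to conclude $f$ is amenable and quasi-subadditive, so your proposal is correct and matches the paper's approach. The additional direct derivation on $(\mathbb{R},|{\cdot}|)$ is a valid bonus but is not needed.
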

\begin{proof}
Assume $f\in \mathcal{B}$. Then by the Theorem \ref{th2}, $f\in \mathcal{EB}$, and by the Theorem \ref{thEB}, $f$ es amenable and  quasi-subadditive.
\end{proof}

Of course, a natural question is:

\begin{problem}
If $f:[0,\infty)\to [0,\infty)$ is amenable and subadditive, then  $f$ is a element of $\mathcal{EB}$?
\end{problem}

So far the author does not know an answer to the above question; however, if we add as a hypothesis that $f$ is increasing, the answer to the previous question is yes.

\begin{theorem}\label{thG1}
Let $f:[0,\infty)\to [0,\infty)$. If $f$ is amenable, quasi-subadditive, and increasing on $[0,\infty)$, then $f\in \mathcal{EB}$.
\end{theorem}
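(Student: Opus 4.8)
The plan is to verify the three axioms of an extended $b$--metric for $f\circ d_{\theta}$ directly, for an arbitrary extended $b$--metric space $(X,d_{\theta})$ with $\theta\colon X\times X\to[1,\infty)$, and to exhibit an explicit control function $\hat\theta$. Fix a quasi-subadditivity constant $s\geq1$, so that $f(a+b)\leq s\bigl(f(a)+f(b)\bigr)$ for all $a,b\geq0$, and note $f(0)=0$ by amenability. Then axiom $(d_{\theta}1)$ is immediate: $(f\circ d_{\theta})(x,y)=0$ iff $d_{\theta}(x,y)=0$ iff $x=y$, using amenability again; and $(d_{\theta}2)$ is inherited verbatim from the symmetry of $d_{\theta}$. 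So the whole task reduces to producing $\hat\theta\colon X\times X\to[1,\infty)$ with
\[
(f\circ d_{\theta})(x,z)\ \leq\ \hat\theta(x,z)\bigl((f\circ d_{\theta})(x,y)+(f\circ d_{\theta})(y,z)\bigr)\qquad\text{for all }x,y,z\in X.
\]

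First I would prove a dilation estimate: for every integer $k\geq0$ and every $u\geq0$,
\[
f(2^{k}u)\ \leq\ (2s)^{k}\,f(u),
\]
by induction on $k$, the step being $f(2^{k+1}u)=f(2^{k}u+2^{k}u)\leq 2s\,f(2^{k}u)\leq(2s)^{k+1}f(u)$. Because $f$ is increasing, I can then round non-integer dilation factors up: for a real $t\geq1$, setting $k(t):=\lceil\log_{2}t\rceil$ gives $t\leq 2^{k(t)}$, hence $f(tu)\leq f(2^{k(t)}u)\leq(2s)^{k(t)}f(u)$. This monotonicity step is exactly what lets the argument cope with the fact that $\theta(x,z)$ is an arbitrary real $\geq1$ rather than an integer, and it is the only place where the ``increasing'' hypothesis is genuinely used.

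Then I would set
\[
\hat\theta(x,z):=s\,(2s)^{\lceil\log_{2}\theta(x,z)\rceil},
\]
which takes values in $[1,\infty)$ since $s\geq1$. To verify $(d_{\theta}3)$ I start from the extended triangle inequality $d_{\theta}(x,z)\leq\theta(x,z)\bigl(d_{\theta}(x,y)+d_{\theta}(y,z)\bigr)$, apply the increasing function $f$, then the dilation estimate with $t=\theta(x,z)$ and $u=d_{\theta}(x,y)+d_{\theta}(y,z)$, and finally quasi-subadditivity on $u$:
\[
\begin{aligned}
f\bigl(d_{\theta}(x,z)\bigr)
&\leq(2s)^{\lceil\log_{2}\theta(x,z)\rceil}\,f\bigl(d_{\theta}(x,y)+d_{\theta}(y,z)\bigr)\\
&\leq s\,(2s)^{\lceil\log_{2}\theta(x,z)\rceil}\bigl(f(d_{\theta}(x,y))+f(d_{\theta}(y,z))\bigr).
\end{aligned}
\]
This is precisely the inequality required for the chosen $\hat\theta$; hence $(f\circ d_{\theta})_{\hat\theta}$ is an extended $b$--metric on $X$, and since $(X,d_{\theta})$ was arbitrary, $f\in\mathcal{EB}$.

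The hard part will be the dilation estimate together with its non-integer upgrade: quasi-subadditivity by itself only controls $f$ on a sum of \emph{two} arguments, so it has to be iterated, and monotonicity of $f$ must then be invoked to absorb the real factor $\theta(x,z)$ — this is exactly the gap that keeps the weaker Problem (merely amenable and subadditive, with no monotonicity) open. A minor but necessary bookkeeping point is that $\hat\theta$ must land in $[1,\infty)$, which is why I take the prefactor to be $s$ rather than $1$.
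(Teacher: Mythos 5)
Your proof is correct, and it takes a genuinely different (and more careful) route than the paper's. The paper's own argument chooses the new control function to be the constant $s$ from quasi-subadditivity and then writes $f(d_{\theta}(x,y))\leq f(d_{\theta}(x,z)+d_{\theta}(z,y))$ by monotonicity; but that step needs the ordinary triangle inequality $d_{\theta}(x,y)\leq d_{\theta}(x,z)+d_{\theta}(z,y)$, which an extended $b$--metric need not satisfy when $\theta>1$, so the paper in effect only handles the case $\theta\equiv 1$ (indeed with $f=\mathrm{id}$, $s=1$ the constant choice $\hat\theta\equiv s$ visibly fails for a genuine extended $b$--metric). Your argument closes exactly this hole: the iterated quasi-subadditivity (doubling) estimate $f(2^{k}u)\leq(2s)^{k}f(u)$, upgraded to real dilation factors via monotonicity, lets you absorb the factor $\theta(x,z)$ and produce an explicit endpoint-dependent control function $\hat\theta(x,z)=s\,(2s)^{\lceil\log_{2}\theta(x,z)\rceil}\in[1,\infty)$, which is precisely what the definition of $\mathcal{EB}$ asks for. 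What each approach buys: the paper's computation is shorter and suffices when the input space is an ordinary ($b$--)metric space, while yours is valid for arbitrary $\theta$ and moreover quantitative, exhibiting how $\hat\theta$ grows with $\theta$. Your verification of $(d_{\theta}1)$ and $(d_{\theta}2)$, the induction, and the positivity bookkeeping for $\hat\theta$ are all in order; the only cosmetic remark is that the case $d_{\theta}(x,y)+d_{\theta}(y,z)=0$ is trivially covered since $f(0)=0$, which you may want to note explicitly.
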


\begin{proof}
Assume that $f$ is amenable, quasi-subadditive, and increasing on $[0,\infty)$.
Let $(X,d_{\theta})$ be a extended $b$--metric space.
Show that, there exist $\hat{\theta}:X\times X\to [1,\infty)$ such that $(f\circ d_{\theta})_{\hat{ \theta}}$ is an extended $b$--metric on $X$.

As $f$ is amenable, $0=(f\circ d_{\theta})_{\hat{ \theta}}(x,y)$ if and only if $x=y$.
The property $d_{\theta}2$  is easy to verify.
Since $f$ is quasi-subadditive, there exist $s\geq 1$ such that
\begin{equation}\label{eq0}
f(a+b)\leq s(f(a)+f(b))\quad {\rm for}\quad{\rm all}\quad a,b\in [0,\infty).
\end{equation}

Lets $x,y,z\in X$, and  $\theta(x,y)=s$.  We have to $\theta(x,y)\geq 1$.
We will see that the property $d_{\theta}3$ is satisfied.

By inequality \eqref{eq0}, and the hypothesis that $f$ is increasing, we obtain
\[
\hspace{-40mm}(f\circ d_{\theta})_{\hat{\theta}}(x,y)=f(d_{\theta}(x,y))
\]
\[
\hspace{-2mm}\leq f(d_{\theta}(x,z)+d_{\theta}(z,y))
\]
\[
\hspace{15mm}\leq \theta(x,y)(f(d_{\theta}(x,z))+f(d_{\theta}(z,y)))
\]
\[
\hspace{26mm}=\theta(x,y)((f\circ d_{\theta})_{\hat{\theta}}(x,z)+(f\circ d_{\theta})_{\hat{\theta}}(z,y)).
\]
This implies that $f\in \mathcal{EB}$.
\end{proof}

Applying the triangular inequality and the previous definition we obtain.
\begin{remark}\label{obs4} If $(X, d_{\theta})$ is a extended $b$--metric spaces, then
\[
(d(x,y), d(y,z), d(x,z))\in \triangle_{\theta}\quad{\rm for}\quad{\rm all}\quad x,y,z\in X.
\]
\end{remark}

To prove Theorem \ref{th6}, the following proposition is useful.

\begin{proposition}\label{pro0} ([15]) Let $a$, $b$ and $c$ be positive real numbers.
Then $(a,b,c)\in\triangle$ iff there are $u,v,w\in \mathbb{R}^{2}$, $u\neq v\neq w$, such that
$a=d(u,v)$, $b=d(u,w)$ y $c=d(v,w)$, where $d$ denotes the Euclidean metric on $\mathbb{R}^{2}$.
\end{proposition}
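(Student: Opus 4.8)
The plan is to prove the two implications separately: the forward direction is immediate from the metric axioms, while the converse requires an explicit coordinate construction of a (possibly degenerate) triangle with the prescribed side lengths.

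For the ``only if'' direction, suppose $u,v,w\in\mathbb{R}^{2}$ are pairwise distinct with $a=d(u,v)$, $b=d(u,w)$ and $c=d(v,w)$, where $d$ is the Euclidean metric. Applying the triangle inequality (M3) for $d$ to the three points in the three relevant orders yields $a=d(u,v)\le d(u,w)+d(w,v)=b+c$, then $b=d(u,w)\le d(u,v)+d(v,w)=a+c$, and $c=d(v,w)\le d(v,u)+d(u,w)=a+b$. Hence $(a,b,c)\in\triangle$.

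For the ``if'' direction, given $(a,b,c)\in\triangle$ with $a,b,c>0$, I would exhibit the three points directly. Put $u=(0,0)$ and $v=(a,0)$, so $d(u,v)=a$, and seek $w=(x,y)$ with $x^{2}+y^{2}=b^{2}$ and $(x-a)^{2}+y^{2}=c^{2}$. Subtracting the two equations forces $x=\frac{a^{2}+b^{2}-c^{2}}{2a}$, and one then sets $y=\sqrt{\,b^{2}-x^{2}\,}$. The only substantive point is to check that $b^{2}-x^{2}\ge 0$, i.e. $-b\le x\le b$: the inequality $x\le b$ is equivalent to $(a-b)^{2}\le c^{2}$, that is $|a-b|\le c$, which follows from the triangle inequalities $a\le b+c$ and $b\le a+c$; and $x\ge -b$ is equivalent to $(a+b)^{2}\ge c^{2}$, that is $c\le a+b$, the third triangle inequality. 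With $w$ so defined, $d(u,w)=b$ and $d(v,w)=c$ by construction, and $u,v,w$ are pairwise distinct since $a,b,c$ are all strictly positive.

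The main (and essentially only) obstacle is confirming that the discriminant $b^{2}-x^{2}$ is nonnegative; once the three triangle inequalities are rewritten in the equivalent form $|a-b|\le c\le a+b$, this is routine, and no case analysis is needed because every quantity involved is positive.
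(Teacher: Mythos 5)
Your proof is correct and complete: the forward direction is exactly the triangle inequality for the Euclidean metric, and your explicit construction $u=(0,0)$, $v=(a,0)$, $w=\bigl(\tfrac{a^{2}+b^{2}-c^{2}}{2a},\sqrt{b^{2}-x^{2}}\bigr)$ together with the verification $|a-b|\le c\le a+b$ settles the converse, with positivity of $a,b,c$ giving the distinctness of the points. Note that the paper itself gives no proof of this proposition --- it is quoted from the cited reference --- so there is no internal argument to compare against; your coordinate construction is the standard one used in the metric-preserving-functions literature.
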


\begin{theorem}\label{th6} Suppose $f:[0,\infty)\to [0,\infty)$ is amenable.
Then the following statements are equivalent.

{\rm (i)} $f\in  \mathcal{EB}$.

{\rm (ii)} There exists $\theta:X\times X\to [1,\infty)$ such that $(f(a),f(b),f(c))\in\triangle_{\theta}$ for all $(a,b,c)\in\triangle$.
\end{theorem}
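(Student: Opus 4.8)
The plan is to carry the classical triangle‑triplet description of metric‑preserving functions over to the extended $b$‑metric setting, using Proposition \ref{pro0} to realize every abstract triangle triplet inside the single Euclidean plane $(\mathbb{R}^2,d)$, and Theorem \ref{thEB} (amenability and quasi‑subadditivity of members of $\mathcal{EB}$) to keep constants under control. Since $f$ is assumed amenable throughout, axioms $(d_\theta 1)$ and $(d_\theta 2)$ are never an issue, and the whole argument is about the relaxed triangle inequality $(d_\theta 3)$.

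For $(i)\Rightarrow(ii)$ I would argue as follows. Assume $f\in\mathcal{EB}$. By Remarks \ref{ob0} and \ref{ob2}, $(\mathbb{R}^2,d)$ with the Euclidean metric is an extended $b$‑metric space (with the constant function $1$), so there is $\hat\theta:\mathbb{R}^2\times\mathbb{R}^2\to[1,\infty)$ making $(f\circ d)_{\hat\theta}$ an extended $b$‑metric; put $\theta=\hat\theta$. Let $(a,b,c)\in\triangle$. If one coordinate is $0$, the other two coincide and the three defining inequalities of $\triangle_\theta$ hold at once because $f$ is amenable and $\theta\ge 1$. If $a,b,c>0$, Proposition \ref{pro0} produces distinct $u,v,w\in\mathbb{R}^2$ with $a=d(u,v)$, $b=d(u,w)$, $c=d(v,w)$; writing $(d_\theta 3)$ for $(f\circ d)_{\hat\theta}$ at the triples $(u,w,v)$, $(u,v,w)$ and $(w,u,v)$ gives $f(a)\le\hat\theta(u,v)(f(b)+f(c))$, $f(b)\le\hat\theta(u,w)(f(a)+f(c))$ and $f(c)\le\hat\theta(w,v)(f(a)+f(b))$, i.e.\ $(f(a),f(b),f(c))\in\triangle_\theta$.

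For $(ii)\Rightarrow(i)$, assume (ii) with $\theta$ as in its statement. The key consequence of (ii) I would extract first is that $f$ is bounded away from $0$ on $[c,\infty)$ for every $c>0$: if $f(t_n)\to 0$ along some $t_n\ge c$, then $(t_n,t_n,c)\in\triangle$, so (ii) gives $f(c)\le 2\theta(z,y)f(t_n)$ for any fixed $z,y$, and letting $n\to\infty$ forces $f(c)=0$, contradicting amenability. Now let $(Y,\rho)$ be an arbitrary extended $b$‑metric space, with associated function $\sigma:Y\times Y\to[1,\infty)$, so $\rho(x,z)\le\sigma(x,z)(\rho(x,y)+\rho(y,z))$ for all $x,y,z$. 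Define $\hat\theta(x,x)=1$ and, for $x\ne y$,
\[
\hat\theta(x,y)=\max\Bigl\{1,\ \tfrac{f(\rho(x,y))}{m(x,y)}\Bigr\},\qquad m(x,y):=\inf\{f(t):\ t\ge\rho(x,y)/(2\sigma(x,y))\},
\]
which is a finite positive real by the previous step. For $x\ne z$, the relaxed triangle inequality forces $\max\{\rho(x,y),\rho(y,z)\}\ge\rho(x,z)/(2\sigma(x,z))$, hence $f(\rho(x,y))+f(\rho(y,z))\ge m(x,z)$, and therefore $f(\rho(x,z))\le\hat\theta(x,z)(f(\rho(x,y))+f(\rho(y,z)))$; the case $x=z$ is trivial since $f(0)=0$. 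Together with amenability and symmetry of $\rho$, this would show $(f\circ\rho)_{\hat\theta}$ is an extended $b$‑metric, hence $f\in\mathcal{EB}$.

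The step I expect to be the real obstacle is the construction of a \emph{finite}‑valued $\hat\theta$ in the converse: in an abstract extended $b$‑metric space there can be points $y$ far from both $x$ and $z$, and a priori $f$ could be arbitrarily small there, making the naive choice of $\hat\theta$ blow up. Ruling this out is exactly what the ``$f$ is bounded below on $[c,\infty)$'' observation does, and that observation — not any manipulation of $(d_\theta 3)$ itself — is the heart of the proof; the forward direction is comparatively routine once Proposition \ref{pro0} is in hand.
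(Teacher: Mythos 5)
Your proposal is correct, and while your direction (i)$\Rightarrow$(ii) coincides with the paper's (realize each positive triplet in $(\mathbb{R}^{2},d)$ via Proposition \ref{pro0} and read off the three instances of $(d_{\theta}3)$; your separate handling of triplets with a zero entry is a minor improvement), your converse is genuinely different and in fact repairs a defect. The paper's converse feeds the triple $(d_{\theta}(x,y),d_{\theta}(x,z),d_{\theta}(z,y))$ directly into (ii), which tacitly assumes this triple lies in $\triangle$; for an extended $b$--metric the ordinary triangle inequality is exactly what is not available, so that step is only valid when $d_{\theta}$ happens to be a metric. You identified this obstacle and instead extracted from (ii) and amenability the lower bound $\inf\{f(t):t\ge c\}>0$ for every $c>0$ (legitimate under the paper's ``for all $x,y,z\in X$'' definition of $\triangle_{\theta}$, since $\theta(z,y)$ stays fixed while the triplet $(t_{n},t_{n},c)$ varies), and then built $\hat{\theta}(x,z)$ pairwise from the relaxed triangle inequality of the given space $(Y,\rho)$ with control function $\sigma$; the verification of $(d_{\theta}3)$ via $\max\{\rho(x,y),\rho(y,z)\}\ge \rho(x,z)/(2\sigma(x,z))$ is sound. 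What your route buys: it covers genuinely non-metric extended $b$--metrics, and it shows that (ii) enters only through the lower-bound property, so in effect you characterize the amenable members of $\mathcal{EB}$ as those $f$ bounded away from zero on every $[c,\infty)$ --- sharper information than the paper's argument yields. The one blemish you inherit from the statement itself (and share with the paper) is the vagueness of where $\theta$ lives in (ii): under the literal ``for all $x,y,z$'' reading, the forward direction as written (yours and the paper's) only produces the inequalities at the realizing points $u,v,w$; your converse still hooks up with it because the pair of points realizing the fixed side $c$ can be held fixed in the plane while $t_{n}\to\infty$, but that reconciliation deserves an explicit sentence in a final write-up.
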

\begin{proof} Assume that $f\in  \mathcal{EB}$.
Let $d$ be the Euclidean metric on $\mathbb{R}^{2}$.
Then $f\circ d$ is a extended $b$--metric.
So there exist $\theta:X\times X\to [1,\infty)$ such that
\[
(f\circ d)(x,y)\leq \theta(x,y)((f\circ d)(x,z)+(f\circ d)(z,y))\quad{\rm for}\quad{\rm all}\quad x,y,z\in \mathbb{R}^{2}.
\]
Let $(a,b,c)\in \triangle$. By the Proposition \ref{pro0}, there are $u,v,w\in \mathbb{R}^{2}$ such that $d(u,w)=a$, $d(u,v)=b$ and $d(v,w)=c$. Then
\[
f(a)=(f\circ d)(u,w)\leq \theta(u,w)((f\circ d)(u,v)+(f\circ d)(v,w))
\]
\[
\hspace{8mm}=\theta(u,w)(f(b)+f(c)).
\]
Similarly, $f(b)\leq \theta(u,v)(f(a)+f(c))$ and $f(c)\leq \theta(v,w)(f(a)+f(b)) $.
Therefore $(f(a),f(b),f(c))\in \triangle_{\theta}$.

For the converse, assume that there exists $\theta:X\times X\to [1,\infty)$ such that $(f(a),f(b),f(c))\in \triangle_{\theta}$ for all $(a,b,c)\in \triangle$.
Let  $(X,d_{\theta})$ be a extended $b$-metric space and let $x,y,z\in X$.
Since $f$ is amenable, $(f\circ d_{\theta})(x,y)=0$ if and only if $x=y$.
The condition $d_{\theta}2$ is obvious. So it remains to show that $(f\circ d_{\theta})_{\hat{ \theta}}$ satisfies $d_{\theta}3$.
Since
$(f(d(x,y)),f(d(x,z)),f(d(z,y)))\in\triangle_{\theta}$, for $(d(x,y), d(x,z),d(z,y))\in\triangle$, it follows that

\[
(f\circ d)(x,y)\leq \theta(x,y)((f\circ d)(x,z)+(f\circ d)(z,y)).
\]
Hence $f\circ d$ is a extended $b-$metric. This completes the proof.

\end{proof}

If we replace $\mathcal{EB}$ by $\mathcal{MB}$ in Theorem \ref{th6}, we obtain as a corollary the results given by Khemaratchatakumthorn and Pongsriiam in {\rm \cite[Theorem 17]{T.K}}.

\begin{corollary}\label{th5} Let $f:[0,\infty)\to [0,\infty)$.
If $f\in  \mathcal{MB}$, then $f$ is amenable and quasi-subadditive.
\end{corollary}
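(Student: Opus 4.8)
The plan is to obtain this as an immediate consequence of Theorem~\ref{thEB} via the inclusion $\mathcal{MB}\subseteq\mathcal{EB}$. First I would recall that Theorem~\ref{thkp} gives $\mathcal{MB}=\mathcal{B}$, and that Theorem~\ref{th2} gives $\mathcal{B}\subseteq\mathcal{EB}$ (this is precisely the content of the corollary stated just after Theorem~\ref{th2}). Composing these, $\mathcal{MB}\subseteq\mathcal{EB}$, so every $f\in\mathcal{MB}$ lies in $\mathcal{EB}$; Theorem~\ref{thEB} then yields at once that such an $f$ is amenable and quasi-subadditive. This is a two-line argument requiring no new computation.

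For completeness I would also include a self-contained direct proof, parallel to the proof of Theorem~\ref{thEB}. Take $f\in\mathcal{MB}$ and let $d(x,y)=|x-y|$ be the usual metric on $\mathbb{R}$. Since $f\in\mathcal{MB}$, $f\circ d$ is a $b$-metric on $\mathbb{R}$, so there is $s\geq 1$ with $(f\circ d)(x,y)\leq s((f\circ d)(x,z)+(f\circ d)(z,y))$ for all $x,y,z\in\mathbb{R}$, and $f\circ d$ satisfies axiom (B1). For amenability, $f(0)=(f\circ d)(0,0)=0$, and if $f(x)=0$ then $(f\circ d)(0,x)=0$, hence $x=0$ by (B1). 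For quasi-subadditivity, given $a,b\in[0,\infty)$,
\[
f(a+b)=(f\circ d)(0,a+b)\leq s\big((f\circ d)(0,a)+(f\circ d)(a,a+b)\big)=s\big(f(a)+f(b)\big).
\]

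There is essentially no obstacle here. The one point to be careful about is that the constant $s$ supplied by the $b$-metric $f\circ d$ on $\mathbb{R}$ is uniform over all $a,b$: it is attached to the single space $(\mathbb{R},f\circ d)$, so the same $s$ serves every pair $(a,b)$, which is exactly what the definition of quasi-subadditivity demands. As an alternative one could cite Corollary~\ref{corEB} together with $\mathcal{MB}=\mathcal{B}$ for the quasi-subadditivity, reading amenability off its proof, but routing through Theorem~\ref{thEB} is the most economical since it delivers both conclusions simultaneously.
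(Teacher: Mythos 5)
Your main argument is exactly the paper's proof: use $\mathcal{MB}=\mathcal{B}$ from Theorem~\ref{thkp} and $\mathcal{B}\subseteq\mathcal{EB}$ from Theorem~\ref{th2}, then apply Theorem~\ref{thEB}. The supplementary direct proof via the usual metric on $\mathbb{R}$ is also correct, but it is just the argument of Theorem~\ref{thEB} specialized to this case, so the approach is essentially the same.
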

\begin{proof} Assume that $f\in  \mathcal{MB}$. By the theorems \ref{thkp} and \ref{th2}, we obtain that $f\in \mathcal{EB}$.
By the theorem \ref{thEB},  $f$ is amenable y quasi-subadditive.
\end{proof}

\begin{corollary}\label{th5} Suppose $f:[0,\infty)\to [0,\infty)$ is amenable.
Then the following statements are equivalent.

{\rm (i)} $f\in  \mathcal{MB}$.

{\rm (ii)} There exists $s\geq 1$ such that $(f(a),f(b),f(c))\in\triangle_{s}$ for all $(a,b,c)\in\triangle$.
\end{corollary}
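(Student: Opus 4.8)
The plan is to deduce this corollary directly from Theorem~\ref{th6} together with Theorem~\ref{thkp}(2), which identifies $\mathcal{MB}$ with $\mathcal{B}$, and the observation in Remark~\ref{obs5}(ii) that an $s$-triangle triplet is a $\theta$-triangle triplet for the constant function $\theta\equiv s$. So the corollary should not require a fresh argument from scratch; it is the ``constant-$\theta$'' specialization of Theorem~\ref{th6}.

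First I would prove (i)$\Rightarrow$(ii). Assume $f$ is amenable and $f\in\mathcal{MB}$. By Theorem~\ref{thkp}(1)--(2) we have $\mathcal{MB}=\mathcal{B}$, and by Corollary~\ref{corEB} (or Theorems~\ref{th2} and \ref{thEB}) $f$ is then amenable and quasi-subadditive; in particular there is a constant $s\geq 1$ with $f(a+b)\leq s(f(a)+f(b))$ for all $a,b\in[0,\infty)$. Now take any triangle triplet $(a,b,c)\in\triangle$. Using $a\leq b+c$ and the quasi-subadditivity estimate I get $f(a)\leq f(b+c)\leq s(f(b)+f(c))$ — here one must be slightly careful, since quasi-subadditivity alone does not give $f(a)\le f(b+c)$ unless $f$ is increasing; the cleaner route is to invoke Proposition~\ref{pro0} to realize $(a,b,c)$ as Euclidean distances $d(u,w)=a$, $d(u,v)=b$, $d(v,w)=c$ in $\mathbb{R}^2$, apply the fact that $f\circ d$ is a $b$-metric on $\mathbb{R}^2$ (with the same constant $s$), and read off $f(a)=(f\circ d)(u,w)\leq s((f\circ d)(u,v)+(f\circ d)(v,w))=s(f(b)+f(c))$. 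The analogous inequalities for $f(b)$ and $f(c)$ follow by symmetry, so $(f(a),f(b),f(c))\in\triangle_s$.

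Next I would prove (ii)$\Rightarrow$(i). Assume there is an $s\geq 1$ with $(f(a),f(b),f(c))\in\triangle_s$ whenever $(a,b,c)\in\triangle$. Let $(X,d)$ be any metric space and $x,y,z\in X$. Amenability gives $(f\circ d)(x,y)=0\iff x=y$, and symmetry of $f\circ d$ is inherited from $d$. For the relaxed triangle inequality, note $(d(x,y),d(x,z),d(z,y))\in\triangle$ by the triangle inequality in $(X,d)$, so by hypothesis $(f(d(x,y)),f(d(x,z)),f(d(z,y)))\in\triangle_s$, which is exactly $(f\circ d)(x,y)\leq s((f\circ d)(x,z)+(f\circ d)(z,y))$. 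Hence $f\circ d$ is a $b$-metric, i.e.\ $f\in\mathcal{B}=\mathcal{MB}$.

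I expect the only real subtlety to be the ``$a\le b+c \Rightarrow f(a)\le f(b+c)$'' gap in the forward direction: since $f$ is not assumed increasing, one cannot pass monotonically from $a\le b+c$ to the value $f(b+c)$, and the honest fix is to go through the geometric realization of Proposition~\ref{pro0} (exactly as in the proof of Theorem~\ref{th6}) rather than through the one-variable inequality. Everything else is a routine transcription of Theorem~\ref{th6} with $\theta$ taken constant and $\triangle_\theta$ replaced by $\triangle_s$ via Remark~\ref{obs5}(ii).
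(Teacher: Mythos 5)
Your proof is correct, but it takes a noticeably different (and in fact more self-contained) route than the paper. The paper proves (i)$\Rightarrow$(ii) by passing through $\mathcal{EB}$: from $f\in\mathcal{MB}$ it invokes Theorems \ref{thkp} and \ref{th2} to get $f\in\mathcal{EB}$, then Theorem \ref{th6} to get $(f(a),f(b),f(c))\in\triangle_{\theta}$ for some $\theta$, and finally tries to extract a single constant $s$ by contradiction (the step ``let $s=\theta(s,t)$''), while the converse is simply delegated to \cite[Theorem 17]{T.K}. You instead prove the forward direction directly: realize an arbitrary $(a,b,c)\in\triangle$ as Euclidean distances via Proposition \ref{pro0} and read the three inequalities off the $s$-triangle inequality of the $b$-metric $f\circ d$ on $\mathbb{R}^{2}$; and you prove the converse from scratch. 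Your route buys two things: it is complete (no outsourced converse), and it avoids the delicate point in the paper's argument, namely that a pointwise bound by a \emph{function} $\theta$ does not obviously yield a uniform $s$ --- you sidestep this because the single $b$-metric constant of $f\circ d$ on the Euclidean plane already serves for every triplet. Two small touch-ups: the constant you use should simply be the $b$-metric constant of $f\circ d$ on $\mathbb{R}^{2}$ (it need not coincide with the quasi-subadditivity constant, and the quasi-subadditivity detour can be dropped altogether, as you yourself note); and in (ii)$\Rightarrow$(i) what you have shown directly is $f\in\mathcal{MB}$ (the definition requires only metric domains), so the appeal to $\mathcal{B}=\mathcal{MB}$ is unnecessary. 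Also, Proposition \ref{pro0} is stated for positive $a,b,c$, so degenerate triplets containing $0$ deserve the one-line check that $a\leq b+c$, $b\leq a+c$, $c=0$ forces $a=b$, whence $(f(a),f(b),f(c))\in\triangle_{s}$ trivially.
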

\begin{proof} Assume that $f\in  \mathcal{MB}$. By the theorems \ref{thkp} and \ref{th2}, we obtain that $f\in \mathcal{EB}$.
Now by the Theorem \ref{th6}, there exists $\theta:X\times X\to [1,\infty)$ such that $(f(a),f(b),f(c))\in\triangle_{\theta}$ for all $(a,b,c)\in\triangle$. Suppose that, for each $s\geq 1$, there exists $(a,b,c)\in \triangle$ such that $(f(a),f(b),f(c))\not\in \triangle_{s}$.
Then
\[
f(a)>s(f(b)+f(c)),\quad f(b)>s(f(a)+f(c))\quad {\rm or}\quad f(c)>s(f(a)+f(b)).
\]
Let $s=\theta(s,t)$. We obtained
\[
f(a)>\theta(s,t)(f(b)+f(c)),\quad f(b)>\theta(s,t)(f(a)+f(c))
\]
or $f(c)>\theta(s,t)(f(a)+f(b))$.
Namely, $(f(a),f(b),f(c))\not\in\triangle_{\theta}$ for some $(a,b,c)\in\triangle$, which is a contradiction.

For the converse, see the proof proposed in {\rm \cite[Theorem 17]{T.K}}.
\end{proof}

The Theorem \ref{th2},  show the set of all
extended $b$-metric-preserving functions, contains the $b$-metric-preserving functions and these in turn to the ultrametric-preserving functions.

In the following theorem we will verify that, if $f$ belongs to the larger family and $\lim_{x\to 0^{+}}f(x)=a$
and $f(x)=a$ for all $x\in(0,b]$, then the graph $f$ is contained in  the region proposed by J, Dobos and Z, Piotrowski see \cite{D.P}.

\begin{figure}[h]\label{fig6}
\unitlength=0.45mm \linethickness{0.7pt}
\begin{picture}(0,135)
\put(-40,0){\vector(1,0){133}}
\put(-40,0){\vector(0,1){130,0}}
\multiput(-40,50)(25,6){5}{\line(1,0){25}}
\multiput(-40,50)(25,-6){5}{\line(1,0){25}}
\put(50,100){\makebox(1,1)[1,1]{{\footnotesize $f(x)$}}}
\put(-45,75){\makebox(1,1)[1,1]{{\footnotesize $2a$}}}
\put(-45,50){\makebox(1,1)[1,1]{{\footnotesize $a$}}}
\put(-45,25){\makebox(1,1)[1,1]{{\footnotesize $\frac{a}{2}$}}}
\put(-15,-5){\makebox(1,1)[1,1]{{\footnotesize $b$}}}
\put(-40,-5){\makebox(1,1)[1,1]{{\footnotesize $0$}}}
\put(-40,0){\circle*{3.0}}
\put(10,-5){\makebox(1,1)[1,1]{{\footnotesize $2b$}}}
\put(35,-5){\makebox(1,1)[1,1]{{\footnotesize $3b$}}}
\put(60,-5){\makebox(1,1)[1,1]{{\footnotesize $4b$}}}
\put(85,-5){\makebox(1,1)[1,1]{{\footnotesize $5b$}}}
\end{picture}
\end{figure}

\begin{theorem}\label{thm20} Suppose that $f\in  \mathcal{BE}$,  $\lim_{x\to 0^{+}}f(x)=a$
and $f(x)=a$ for all $x\in(0,b]$, then for each $n\in \mathbb{N}$ and each $x\in (nb,(n+1)b]$,
$$\frac{a}{2}\leq f(x)\leq 2^na.$$
\end{theorem}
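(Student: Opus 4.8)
The plan is to exploit the fact (Theorem~\ref{thEB}) that an element of $\mathcal{EB}$ is amenable and quasi-subadditive, together with the hypothesis that $f$ is eventually constant equal to $a$ on the initial interval $(0,b]$, to propagate an upper bound by doubling across successive intervals of length $b$, while the lower bound $a/2$ will come from applying the $\theta$-triangle inequality on a carefully chosen extended $b$--metric space. First I would fix the quasi-subadditivity constant $s\geq 1$ coming from $f\in\mathcal{EB}$, so that $f(u+v)\leq s(f(u)+f(v))$ for all $u,v\geq 0$; note $s$ may be taken $\geq 1$ without loss. Then I would prove the upper bound by induction on $n$: for $n=0$ and $x\in(0,b]$ we have $f(x)=a=2^0a$; assuming the bound holds for $n-1$, any $x\in(nb,(n+1)b]$ can be written as $x=u+v$ with $u\in((n-1)b,nb]$ and $v\in(0,b]$ (for instance $v=x-(n-1)b \in (b,2b]$... — more carefully, split off a piece of length in $(0,b]$), so that $f(x)\leq s(f(u)+f(v))\leq s(2^{n-1}a+a)\leq 2^{n-1}a+2^{n-1}a=2^na$, provided $s=2$; since the Dobos--Piotrowski region in Figure~1 is exactly the region for $s=2$, I would simply observe that after possibly enlarging $s$ to $2$ the inequality $f(u+v)\leq 2(f(u)+f(v))$ still holds (quasi-subadditivity with constant $s$ implies it with any larger constant), which is what the stated bound $2^n a$ encodes. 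Thus the induction closes.

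For the lower bound $f(x)\geq a/2$ for $x>b$ I would argue as follows. By Theorem~\ref{th6}, since $f$ is amenable and lies in $\mathcal{EB}$, there is a function $\theta$ with $(f(p),f(q),f(r))\in\triangle_\theta$ for every triangle triplet $(p,q,r)$. Fixing a point $x>b$, choose a triangle triplet realizing the configuration $(x, t, t)$ with $t$ small — concretely, for small $t>0$ the triple $(x,\,x/2+t,\,x/2+t)$... — no: I want a triple with one short side. Take instead the degenerate-type triplet $(\,b,\,x,\,x\,)$ when $x\geq b/2$: indeed $b\leq x+x$, $x\leq b+x$, so $(b,x,x)\in\triangle$. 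Applying $\triangle_\theta$ and using $f(b)=a$ together with $f(x)\le 2^n a$ gives $a=f(b)\leq \theta\,(f(x)+f(x))=2\theta f(x)$. This is not quite $a/2$ unless $\theta=1$; so instead I would use the limit hypothesis $\lim_{x\to0^+}f(x)=a$: pick a triangle triplet $(x,x,\varepsilon)$ with $\varepsilon>0$ arbitrarily small (valid since $x\leq x+\varepsilon$ and $\varepsilon\leq 2x$), whence $f(x)\leq\theta(\,f(x)+f(\varepsilon)\,)$ is unhelpful, but the symmetric use $f(\text{of the long side }x)\ge$ — the cleanest route is: from $(b,x,x)$, $x \le \theta_1(a + f(x))$ and $a=f(b)\le \theta_2 \cdot 2f(x)$; combined with amenability forcing the relevant triangle inequality to be nontrivial, one gets $2f(x)\ge f(b)=a$, i.e. $f(x)\ge a/2$, once $\theta$ is normalized.

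The step I expect to be the genuine obstacle is the lower bound: the definition of $\mathcal{EB}$ only guarantees \emph{some} $\theta$ (depending on the space) making $(f\circ d_\theta)_{\hat\theta}$ an extended $b$--metric, and in a $\theta$-triangle triplet the constant sits on the larger side, so one must choose the geometric configuration so that $f(b)=a$ is the side being bounded \emph{above} by the others — that is, find a triangle triplet $(p,q,r)$ with $p=b$, $q=r=x$, which works precisely when $x\ge b/2$, automatically true for $x>b$. Then $a=f(p)\le\theta(f(q)+f(r))=2\theta f(x)$. To remove $\theta$ I would invoke that Theorem~\ref{th6}'s $\theta$ can be taken on the space $\mathbb{R}^2$ with the Euclidean metric, and that by considering the triangle with vertices chosen so the "$b$"-side is opposite a vertex where the $\theta$-value equals $1$ — or, more honestly, by noting the intended reading of the theorem is that the Dobos--Piotrowski region (with slopes through $a/2$ and $2a$ at $x=b$) is exactly the $s=2$ band, so the lower bound $a/2$ is the reflection of the upper bound argument: applying quasi-subadditivity in the form $f(b)\le 2(f(x)+f(b-x))$ is vacuous, so instead $f(x)$ with $x\in(nb,(n+1)b]$ satisfies, via a triangle triplet $(x, x, c)$ with $c\in(0,b]$ so $f(c)=a$ and $c\le 2x$: the $\theta$-triplet gives $f(c)\le \theta(2 f(x))$, i.e. $a\le 2\theta f(x)$; taking the infimum configuration and using $\lim_{t\to0^+}f(t)=a$ pins $\theta$ down to yield $f(x)\ge a/2$. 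I would write this final normalization carefully, as it is the only subtle point; everything else is the two-line induction above.
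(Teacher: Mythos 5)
Your induction for the upper bound does not close. From quasi-subadditivity with constant $s$ and the splitting $x=u+v$ with $u\in((n-1)b,nb]$, $v\in(0,b]$, you get $f(x)\le s\,(2^{n-1}a+a)$; even with $s=2$ this equals $2^na+2a$, which exceeds $2^na$, and already for $n=1$ the bound $2a$ would force $s\le 1$, i.e.\ genuine subadditivity. Moreover ``enlarging $s$ to $2$'' goes in the wrong direction: quasi-subadditivity with a larger constant is a weaker statement, while what you need is an upper bound on $s$, and membership in $\mathcal{EB}$ gives none. Indeed, by Theorem \ref{thG1} every amenable, increasing, quasi-subadditive function lies in $\mathcal{EB}$, no matter how large its constant; the function with $f(0)=0$, $f\equiv a$ on $(0,b]$ and $f\equiv Ma$ on $(b,\infty)$, $M$ large, is of this type, satisfies the hypotheses of Theorem \ref{thm20}, and violates $f(x)\le 2a$ on $(b,2b]$. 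So no argument using only ``amenable plus quasi-subadditive for some $s\ge 1$'' can produce the stated constants.

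For the lower bound you correctly locate the crux --- the factor $\theta\ge 1$ sits on the side you need to bound, and nothing ``pins it down'' --- but your proposal never actually removes it: the triplet $(b,x,x)$ only yields $a\le 2\theta f(x)$, i.e.\ $f(x)\ge a/(2\theta)$, and the hypothesis $\lim_{t\to 0^{+}}f(t)=a$ does not normalize $\theta$ (the function with $f\equiv a$ on $(0,b]$ and $f\equiv\varepsilon$ on $(b,\infty)$ lies in $\mathcal{EB}$, with constant $\hat\theta\equiv a/\varepsilon$, yet has $f<a/2$ beyond $b$). For comparison, the paper's own proof is the contrapositive version of your attempt: it exhibits the triplets $(x,x,z)$ and $(\tfrac{x}{2},\tfrac{x}{2},x)$ whose images fail the plain triangle inequality and then asserts, invoking Theorem \ref{th6}, that they fail the $\theta$-inequality for \emph{every} $\theta$ --- a non sequitur, since $\theta$ may be large. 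So the obstacle you flagged is genuine and is not resolved in the paper either; the sharp band $\tfrac{a}{2}\le f(x)\le 2^{n}a$ is a Dobos--Piotrowski ($s=1$, metric-preserving) phenomenon and cannot be extracted from membership in $\mathcal{EB}$ alone, which is why both your two-line induction and your normalization step cannot be repaired without adding a hypothesis that controls the constant.
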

\begin{proof}
We apply the principle of mathematical induction.
Let us see that, for $ n = 1 $, the conclusion is satisfied
\[
\frac{a}{2}\leq f(x)\leq 2a\quad{\rm for}\quad{\rm all}\quad x \in (b,2b].
\]

Inequality first
\[
\frac{a}{2}\leq f(x)\quad{\rm para}\quad{\rm all}\quad  x \in (b,2b].
\]

Suppose that, there exist an  $x \in (b,2b]$ such that  $f(x)<\frac{a}{2}$.
Let $z \in (0,b]$.

Note that, $(x,x,z)$ is a triangle triplet, while  $(f(x),f(x),f(z))$ does not, since
\[
f(x)+f(x)<\frac{a}{2}+\frac{a}{2}=a=f(z).
\]
That is, for each,  $\theta:X\times X\to [1,\infty)$, there exist a triplet $(x,x,z)\in \triangle$ such that
\[
f(z)\not\leq \theta(x,z)(f(x)+f(x)).
\]
So by Theorem \ref{th6}, $f$ does not extended $b-$m\'etric preserving, which contradicts the hypothesis.

Now this other inequality $f(x)\leq 2a$ for all $x \in (b,2b]$.
Suppose that, there exist $x \in (b,2b]$ such that $f(x)>2a$.

Note that, $(\frac{x}{2},\frac{x}{2},x)$ is a triangle triplet, while
$$\Big(f\Big(\frac{x}{2}\Big),f\Big(\frac{x}{2}\Big),f(x)\Big)$$ does not,
since
\[
f\Big(\frac{x}{2}\Big)+f\Big(\frac{x}{2}\Big)=a+a=2a<f(x).
\]
That is, for each, $\theta:X\times X\to [1,\infty)$ there exist a triplet $(\frac{x}{2},\frac{x}{2},x)\in \triangle$ such that
$f(x)\not\leq \theta(\frac{x}{2},z)(f(\frac{x}{2})+f(\frac{x}{2}))$.
So by Theorem \ref{th6}, $f$ does not extended  $b-$m\'etric preserving, which contradicts the hypothesis.

(H.I) Assume that, for $n=k$ the inequality is satisfied,
\[
f(x)\leq 2^{k}a\quad{\rm for}\quad{\rm all}\quad  x\in (kb,(k+1)b].
\]
We will show that, for $n=k+1$ it is also satisfied.
Namely
\[
f(x)\leq 2^{k+1}a\quad{\rm for}\quad{\rm all}\quad x\in ((k+1)b,(k+2)b].
\]
Suppose that, there exist a element
$x\in ((k+1)b,(k+2)b]$ such that $f(x)>2^{k+1}a$.

Note that  $(\frac{x}{2},\frac{x}{2},x)$ is a triangle triplet and
$(f(\frac{x}{2}),f(\frac{x}{2}),f(x))$ does not, since by the inductive hypothesis
$$f\Big(\frac{x}{2}\Big)+f\Big(\frac{x}{2}\Big)\leq 2^{k}a+2^{k}a=2(2^{k}a)=2^{k+1}a<f(x).$$
That is, for each, $\theta:X\times X\to [1,\infty)$ there exist a triplet $(\frac{x}{2},\frac{x}{2},x)\in \triangle$ such that
$f(x)\not\leq \theta(\frac{x}{2},z)(f(\frac{x}{2})+f(\frac{x}{2}))$.
So by Theorem \ref{th6}, $f$ does not extended  $b-$metric preserving, which contradicts the hypothesis.

\end{proof}

\begin{remark} Observe that, if in the Theorem \ref{thm20}  we substitute the interval $(0,b]$ by $(0,1]$ we obtain: If $f\in \mathcal{BE}$, $\lim_{x\to 0^{+}}f(x)=a$
and $f(x)=a$ for each $x\in(0,1]$, then  for each $n\in \mathbb{N}$ and $x\in (n,(n+1)]$, $\frac{a}{2}\leq f(x)\leq 2^na$.
\end{remark}

\bibliographystyle{plain}

\end{document}